\newcommand{\be}{\begin{equation}}
\newcommand{\ee}{\end{equation}}
\newcommand{\benum}{\begin{enumerate}}
\newcommand{\eenum}{\end{enumerate}}
\newcommand{\bit}{\begin{itemize}}
\newcommand{\eit}{\end{itemize}}
\newtheorem{thom}{Theorem}[section]
\newtheorem{lemma}{Lemma}[section]
\newtheorem{prop}{Proposition}[section]
\newtheorem{claim}{Claim}
\begin{document}
\def\s{\subseteq}
\def\n{\noindent}
\def\se{\setminus}
\def\dia{\diamondsuit}
\def\la{\langle}
\def\ra{\rangle}


\title{On the maximum and minimum multiplicative Zagreb indices of graphs with  given number of cut edges}

\author{Shaohui Wang$^{a}$, ~Chunxiang Wang$^{b,}$\footnote{  Authors' email addresses:    S. Wang (shaohuiwang@yahoo.com),  C. Wang (wcxiang@mail.ccnu.edu.cn),  L. Chen  (409137414@qq.com).}, ~Lin Chen$^b$   \\\small\emph {a. Department of Mathematics and Computer Science, Adelphi University, Garden City, NY 11550, USA} \\
\small\emph {b. School of Mathematics and Statistics, Central China Normal University, Wuhan,
430079, P.R. China} }
\date{}
\maketitle

\begin{abstract}

For a  molecular  graph, the first multiplicative Zagreb index $\Pi_1$ is equal to the product of the square of the degree of the vertices, while the second multiplicative Zagreb index $\Pi_2$ is equal to the product of the endvertex degree of each edge over all edges. Denote by $\mathbb{G}_{n,k}$ the set of graphs with $n$ vertices and $k$ cut edges. In this paper, we explore   graphs in terms of    a number of cut edges. In addition, the maximum and minimum multiplicative Zagreb indices of graphs with   given number of cut edges are provided.  Furthermore, we characterize  graphs with the largest and smallest $\Pi_1(G)$ and $\Pi_2(G)$ in $\mathbb{G}_{n,k}$, and our results extend and enrich some known conclusions.

\vskip 2mm \noindent {\bf Keywords:}   Cut edge; Graph transformation;  Multiplicative Zagreb indices; Extremal values. \\
{\bf AMS subject classification:} 05C12,  05C35
\end{abstract}

\section{Introduction}

In the interdisciplinary of mathematics, chemistry and physics,
 molecular invariants/descriptors can be applied the study of quantitative structure-property relationships (QSPR) and
quantitative structure-activity relationships (QSAR),  and
for the descriptive purposes of biological and chemical
properties, such as 
   boiling and melting points and toxicity~\cite{Gutman1996}. One type of the most classical topological molecular descriptors
is named as Zagreb indices $M_1$ and $M_2$~\cite{Gutman1972},  which are literal
quantities in an expected formula for the total
$\pi$-electron energy of conjugated molecules.
In the view of successful considerations on
the  applications on Zagreb indices~\cite{Gutman2014},
Todeschini et al.(2010)~\cite{RT20101,RT20102,Wang2015} introduced the
 multiplicative variants of molecular structure
descriptors, denoted by $\Pi_1$ and $\Pi_2$ the multiplicative Zagreb indices.
(Multiplicative) Zagreb indices are
employed as molecular descriptors in QSPR and QSAR, see \cite{10,11}.

 Mathematicians have exhibited considerable interest
in the properties of Zagreb indices about the extremal values or bounds for the topological indices of graphs,
as well as related problems of characterizing the extremal graphs~\cite{Hu2005,Li2008,shi2015,BF2014,SM2014,Xu2014,WangJ2015,Liu2015,LiuP2015,LiuPX2015,Wang2017}. In addition to a plenty of applications for the Zagreb indices in chemistry, there are many
situations in 
multiplicative Zagreb indices, which  attracted one of the focus of interests in  physics and graph theory. Borovi\'canin et al.~\cite{Borov2016} investigated upper
bounds on Zagreb indices of trees in terms of domination number
and extremal trees are characterized.
Wang and Wei~\cite{Wang2015}  introduced sharp upper and lower bounds of these indices in
$k$-trees.
Liu and Zhang [14] provided several sharp upper bounds for
$\pi_1$-index and $\pi_2$-index in terms of graph parameters
such as the order, size and radius~\cite{Liuz20102}. Wang et al. \cite{LiuPX2015} obtained extremal multiplicative Zagreb indices of trees with given
number of vertices of maximum degree.
Xu and Hua~\cite{Xu20102}  explored an unified approach to characterize extremal (maximal and minimal) trees, unicyclic graphs and bicyclic graphs with respect to multiplicative Zagreb
indices, respectively. 
Iranmanesh et al.~\cite{Iranmanesh20102}  gave the first and the second multiplicative Zagreb indices for a class of chemical moleculor of dendrimers. 
 Also, a lower bound for the
first Zagreb index of trees with a given domination number is
determined and the extremal trees are characterized as well.
  Kazemi~\cite{Ramin2016} studied the bounds for
the moments and the probability generating function of these
indices in a randomly chosen molecular graph with tree structure
of order $n$.  The connected graphs with $k$ cut edges (or vertices) have been considered in many mathematical literatures~\cite{Liu2004,Wu2007,Zhao2010,Deng2010,Bojana2015}. It is natural to consider that, for the $n$-vertex tree, $k=n-1$, and trees with the extremal  multiplicative  Zagreb indices had been studied a long time ago \cite{Iranmanesh20102}.

In view of the  above results, in this paper we further
investigate multiplicative Zagreb indices of graphs with a given number of cut edges. In addition, the maximum and minimum of $\Pi_1(G) $ and $\Pi_2(G) $ of graphs with   given number of cut edges are provided.  Furthermore, we characterize  graphs with the largest and smallest multiplicative Zagreb indices in $\mathbb{G}_{n,k}$.

\section{Preliminary}

  Denote  by $G = (V, E)$  a simple connected graph with $n$ vertices and $m$ edges, where $V = V (G)$ is called vertex set and $E = E(G)$ is called edge set. For  $v\in V(G)$, $N(v)$ denotes the neighbors of $v$, that is,  $N_{G}(v)= \left\{ u|\; uv\in E(G)\right\}$, and $d_{G}(v)= \left| N(v)\right|$ is the degree of $v$. The first and second Zagreb indices \cite{Gutman2014} of a graph $G$ are given by
\begin{eqnarray} \nonumber
M_1(G) = \sum_{u \in V(G)} d(u)^2
~\text{ and}
~M_2(G) = \sum_{uv \in E(G)} d(u)d(v).
\end{eqnarray}
The first multiplicative Zagreb index $\Pi_1 =\Pi_1(G)$ and the second  multiplicative  Zagreb index $\Pi_2 =\Pi_2(G) $ \cite{RT20101,RT20102} of a graph $G$ are defined as
\begin{eqnarray} \nonumber
 \Pi_1(G) = \prod_{u \in V(G)} d(u)^2 ~
\text{ and}\;\;
 \Pi_2(G) = \prod_{uv \in E(G)} d(u)d(v) = \prod_{u \in V(G)}
d(u)^{d(u)}.
\end{eqnarray}

A pendent vertex is a vertex of degree one   and a supporting vertex is a vertex adjacent to at least one pendent vertex. A pendent edge is incident to a pendent vertex and  a supporting vertex. 
For  two graphs $G_1$ and $G_2$, if there exists a common vertex $v$ between them, then let  $G_1vG_2$ be  a graph such that the vertex set of  $G_1vG_2$ is $V(G_1)\bigcup V(G_2)$, $V(G_1)\bigcap V(G_2) ={v}$  and $E(G_1vG_2)= E(G_1)\bigcup E(G_2)$. If   $G_1$, $G_2$, $\cdots$, $G_l$ with $l \geq 2$   share a common vertex $v$, then by $G_1vG_2v\cdots vG_l$ denote this graph.  For $u_1 \in V(G_1)$ and $u_s \in V(G_2)$,  if $P= u_1u_2\cdots u_s$ is a path, then denote this graph by $G_1PG_2$ or $G_1u_1u_2\cdots u_sG_2$ in which $P$ is called an internal path. For $S \subseteq V(G)$ and  $F \subseteq E(G)$,   we use $G[S]$ for the subgraph of $G$ induced by $S$, $G - S$ for the subgraph induced by $V(G) - S$ and $G - F$ for the subgraph of G obtained by deleting $F$. A vertex $u$ (or an edge $e$, respectively) is called a cut vetex (or cut edge, respectively) of a connected graph $G$, if $G- v$ (or $G -e$) has at least two components.   A graph G is said to be $2$-connected  if there does not exist a vertex whose removal disconnects the graph. A block is a connected graph which does not have any cut vertex, and $K_2$ is a trivial block. The endblock contains at most one cut vertex.
As usual, $P_n$, $S_n$ and $C_n$ are a path, a star and a cycle on $n$ vertices, respectively. The cyclomatic number of a connected graph $G$ is given by $c(G)= m-n+1$. In particular, if $c(G)= 0,\,1$ and $2$, then $G$  is a tree, unicyclic graph and bicyclic graph, respectively. If a connected graph on $n$ vertices has the cyclomatic number at least one, then the number of its cut edges is at most $n-3$. Thus, we assume that $G$ has $1 \leq k \leq n-3$ cut edges   in our following discussion.

Let $\mathbb{G}_{n,k}$ be a set of graphs with $n$ vertices and $1 \leq k \leq n-3$ cut edges, and $E_c= \left\{e_1, e_2, \cdots, e_k\right\}$ be a set of cut edges of $G$. Then $E_c$ can be considered as two categories, which are the pendent edges and non-pendent edges (or internal paths of length $1$). 
The components of $G - E_c$ are $2$-connected graphs and isolated vertices.   Denote by
$K_n^S$ (or $K_n^P$, respectively)  a graph obtained by identifying (connecting to, respectively) the nonpendent vertex of a star $S_k$  (or a pendent vertex of a path $P_k$, respectively) to a vertex of $K_{n-k}$  (see Fig 1). In addition, let $C_n^S$ (or $C_n^P$, respectively) be a graph obtained by identifying (connecting to, respectively) the nonpendent vertex of a star $S_k$  (or a pendent vertex of a path $P_k$, respectively) to a vertex of $C_{n-k}$.
 \begin{figure}[htbp]
    \centering
    \includegraphics[width=5in]{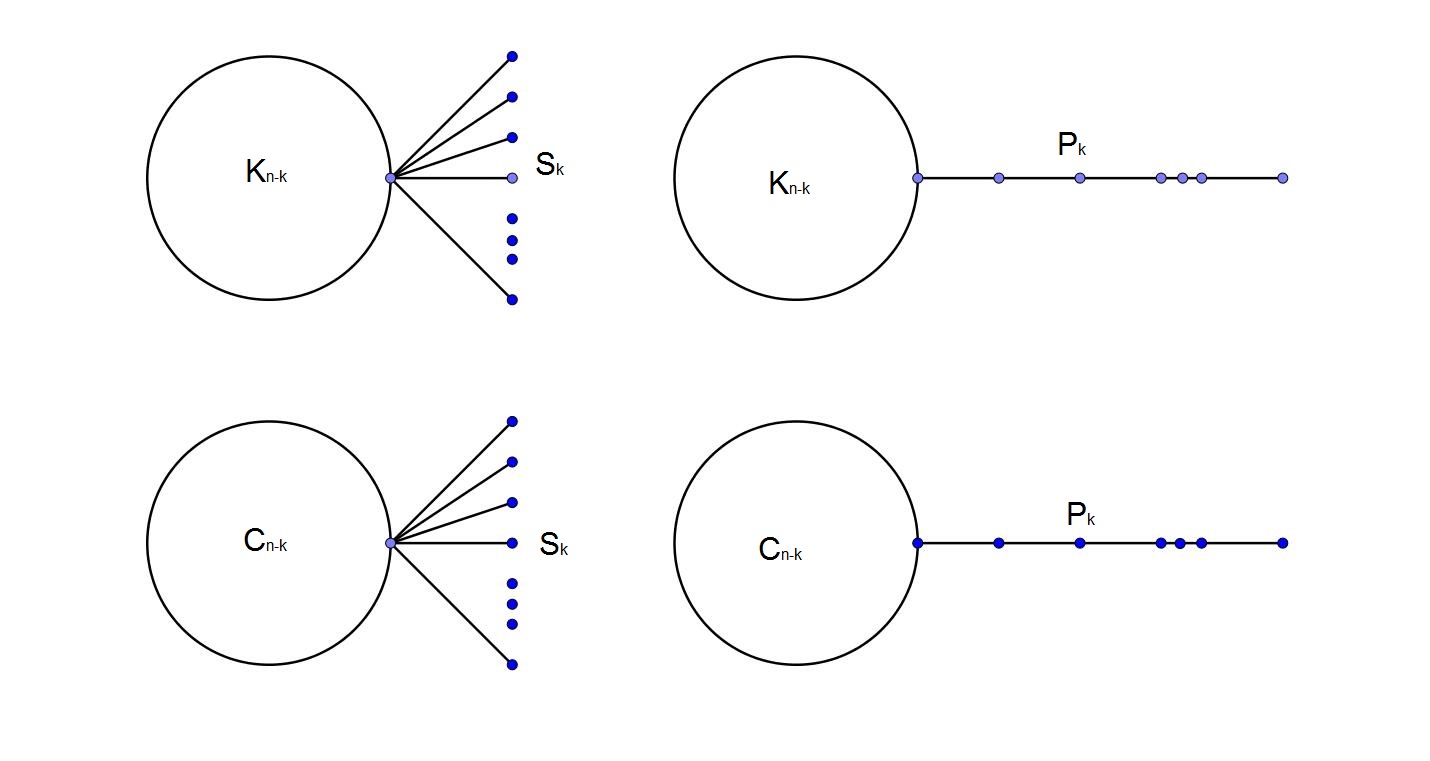}
   \caption{ $K_n^S, K_n^P, C_n^S$ and $C_n^P$.}
   \label{fig: te}
\end{figure}

In our exposition we will use the terminology and
notations of (chemical) graph theory(see \cite{BB,NT}). By elementary calculations, one can derive the following propositions.

\begin{prop}\label{p1}
Let $t(x) = \frac{x}{x+m}$ be a function with $m > 0$. Then $t(x)$ is increasing in $\mathbb{R}$.
\end{prop}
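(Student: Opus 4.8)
The plan is to verify monotonicity by the standard first-derivative test, after first recording the natural domain of $t$. Since $m>0$, the only point at which $t$ fails to be defined is $x=-m$, so strictly speaking the assertion should be read as ``$t$ is increasing on each interval of its domain''; in the paper's applications the argument $x$ will always be a (positive) vertex degree, so $x+m>0$ and this caveat never bites.

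First I would rewrite $t$ in the algebraically suggestive form
\be
t(x) = \frac{x}{x+m} = \frac{(x+m)-m}{x+m} = 1 - \frac{m}{x+m},
\ee
which already exhibits the behaviour: as $x$ grows with $x+m>0$, the subtracted quantity $\frac{m}{x+m}$ shrinks, so $t(x)$ grows. To make this rigorous I would then differentiate, using the quotient rule,
\be
t'(x) = \frac{(x+m)\cdot 1 - x\cdot 1}{(x+m)^2} = \frac{m}{(x+m)^2}.
\ee
Because $m>0$ and $(x+m)^2>0$ for every $x\neq -m$, we obtain $t'(x)>0$ throughout the domain, whence $t$ is strictly increasing on each of the intervals $(-\infty,-m)$ and $(-m,+\infty)$, which is the claim.

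I do not expect any serious obstacle: the entire content is a one-line derivative computation. The only subtlety worth flagging is the singularity at $x=-m$, across which one must not assert global monotonicity (there $t$ passes from $+\infty$ to $-\infty$). Since the proposition is invoked later only with positive arguments, I would simply remark that on any interval avoiding $-m$ — in particular on $[1,\infty)$, where the degree-based applications live — the conclusion holds without qualification.
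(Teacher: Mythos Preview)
Your proof is correct and matches the paper's treatment: the paper simply states that the proposition follows ``by elementary calculations'' without writing out a proof, and your derivative computation $t'(x)=m/(x+m)^2>0$ is precisely that calculation. Your remark about the singularity at $x=-m$ is a fair caveat the paper glosses over, but as you note it is irrelevant in the applications, where the arguments are positive vertex degrees.
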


\begin{prop}\label{p2}
Let $l(x) = \frac{x^x}{(x+m)^{x+m}}$ be a function with $m > 0$. Then $l(x)$ is decreasing  in $\mathbb{R}$.
\end{prop}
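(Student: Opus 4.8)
The plan is to reduce this multiplicative expression to an additive one by passing to logarithms, since $l(x) > 0$ on its natural domain. First I would observe that $x^x = e^{x\ln x}$ is defined (and positive) only for $x > 0$, and likewise $(x+m)^{x+m}$ requires $x+m > 0$; with $m > 0$ both conditions hold precisely when $x > 0$. Hence the statement must be read on $(0,\infty)$, which is exactly the relevant range here, since vertex degrees are at least $1$. I would then set $g(x) = \ln l(x) = x\ln x - (x+m)\ln(x+m)$, and note that because $\ln$ is strictly increasing, $l$ is decreasing if and only if $g$ is decreasing.

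The second step is a single derivative computation. Using $\frac{d}{dx}\bigl(x\ln x\bigr) = \ln x + 1$, I obtain
$g'(x) = (\ln x + 1) - (\ln(x+m) + 1) = \ln x - \ln(x+m) = \ln\frac{x}{x+m}.$
This is the only calculation in the argument, and it is routine.

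Finally, because $m > 0$ we have $0 < \frac{x}{x+m} < 1$ for every $x > 0$ (this is immediate, and is consistent with Proposition \ref{p1}, where $t(x) = \frac{x}{x+m}$ is studied). Consequently $g'(x) = \ln\frac{x}{x+m} < 0$ throughout $(0,\infty)$, so $g$ is strictly decreasing, and therefore so is $l$, which is the claim.

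I do not anticipate any genuine obstacle: the only point deserving care is the domain issue noted above, since $x^x$ is not real-valued for $x \le 0$, so the phrase ``in $\mathbb{R}$'' should be interpreted as ``on the positive reals.'' Everything else follows from the sign of a one-line logarithmic derivative.
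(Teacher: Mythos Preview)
Your argument is correct and is exactly the ``elementary calculation'' the paper alludes to but omits: the paper simply states that Propositions~\ref{p1} and~\ref{p2} follow by elementary calculations, and your logarithmic-derivative computation $g'(x)=\ln\frac{x}{x+m}<0$ is the natural way to carry this out. Your remark that the domain should be read as $(0,\infty)$ is also appropriate, since the applications in the paper all have $x\ge 1$.
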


Based on the concepts of $\Pi_1(G)$ and $\Pi_2(G)$, we have
\begin{lemma}\label{l1}
Let $G=(V, E)$ is a graph and $i=1, \,2$.  \\ 
  (i) If $e=uv\notin E(G)$, $u,v\in V(G)$, then $ \Pi_i(G+uv) > \Pi_i(G)$.\\
  (ii) If $e=uv\in E(G)$, then $\Pi_i(G-e) < \Pi_i(G)$.
\end{lemma}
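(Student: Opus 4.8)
The plan is to prove part (i) directly and then obtain (ii) for free by a reduction. The key structural observation is that inserting a single edge $e=uv$ into $G$ alters \emph{only} the two endpoint degrees, raising $d_G(u)$ to $d_G(u)+1$ and $d_G(v)$ to $d_G(v)+1$ while leaving every other vertex degree untouched. Because both indices are expressed as products indexed by the vertices, namely $\Pi_1(G)=\prod_{w\in V(G)}d(w)^2$ and $\Pi_2(G)=\prod_{w\in V(G)}d(w)^{d(w)}$, I would compare $\Pi_i(G+uv)$ with $\Pi_i(G)$ \emph{factor by factor}: every factor attached to a vertex $w\neq u,v$ cancels in the ratio, so the whole comparison collapses to the two endpoint factors.

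First I would dispose of $\Pi_1$. After cancellation, the ratio $\Pi_1(G+uv)/\Pi_1(G)$ equals $(d_G(u)+1)^2(d_G(v)+1)^2\big/\big(d_G(u)^2\,d_G(v)^2\big)$, and since $(d+1)^2=d^2+2d+1>d^2$ holds for every nonnegative integer $d$, each surviving factor strictly exceeds $1$. Hence $\Pi_1(G+uv)>\Pi_1(G)$, which is the $i=1$ case of (i).

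The real content sits in the $\Pi_2$ step, where the same cancellation reduces matters to the single-variable inequality $(d+1)^{d+1}>d^{\,d}$ applied at each endpoint. For $d\geq 1$ this is quick: $(d+1)^{d+1}=(d+1)\cdot(d+1)^{d}>(d+1)^{d}>d^{\,d}$. Alternatively, and more in the spirit of the paper, one can invoke Proposition~\ref{p2} with $m=1$, since $l(x)=x^{x}/(x+1)^{x+1}$ is decreasing and $l(1)=\tfrac14<1$ forces $l(d)<1$—that is, $(d+1)^{d+1}>d^{\,d}$—for all $d\geq 1$. Multiplying the two endpoint contributions then gives $\Pi_2(G+uv)>\Pi_2(G)$, completing (i).

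Finally I would deduce (ii) by applying (i) to $H=G-e$: since $uv$ is a non-edge of $H$ with $H+uv=G$, part (i) yields $\Pi_i(G)=\Pi_i(H+uv)>\Pi_i(H)=\Pi_i(G-e)$. The only point demanding care is the degenerate behavior of the $\Pi_2$ inequality when an endpoint is isolated (where the convention $0^{0}=1$ makes the endpoint factor stagnant rather than strictly increasing); working under the paper's standing hypothesis that $G$ is connected, so that every degree is at least $1$, removes this ambiguity entirely. Thus I expect no serious obstacle—the argument is elementary—with the mild subtlety being precisely the $\Pi_2$ estimate $(d+1)^{d+1}>d^{\,d}$ and the handling of the edge-versus-vertex product bookkeeping.
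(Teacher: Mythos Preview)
Your proposal is correct and aligns with the paper's treatment: the paper gives no explicit proof of this lemma, merely stating it as an immediate consequence of the definitions of $\Pi_1$ and $\Pi_2$, and your factor-by-factor comparison at the two endpoints is precisely the calculation that justifies this. Your attention to the $0^0$ convention is more careful than the paper itself, which simply relies on its standing assumption that $G$ is connected.
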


Lemma 2.1 yields the following result.
\begin{lemma}\label{l2}
Let $G=(V, E)$ be a  $2$-connected graph and $i = 1, 2$. \\
(i) If~$\Pi_i(G)$ is  maximal, then $G$   is the complete graph $K_n$.  \\
(ii) If~$\Pi_i(G)$ is  minimal, then $G$ is the cycle $C_n$.
 \end{lemma}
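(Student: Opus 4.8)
The plan is to prove Lemma~\ref{l2} as a direct consequence of Lemma~\ref{l1}, reducing each case to a simple extremal argument over the family of $2$-connected graphs on $n$ vertices. Recall that Lemma~\ref{l1}(i) says adding a missing edge strictly increases $\Pi_i$, while (ii) says deleting an edge strictly decreases $\Pi_i$; these monotonicity facts are the only tools I expect to need.

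For part (i), I would argue that the maximizer must be edge-maximal. Suppose $G$ is a $2$-connected graph on $n$ vertices maximizing $\Pi_i$, and suppose for contradiction that $G \neq K_n$. Then there exist two nonadjacent vertices $u,v \in V(G)$, so $e = uv \notin E(G)$. The graph $G + uv$ is still on $n$ vertices (and is still $2$-connected, since adding an edge cannot destroy $2$-connectivity), hence lies in the same family, yet by Lemma~\ref{l1}(i) we have $\Pi_i(G+uv) > \Pi_i(G)$, contradicting maximality. Therefore $G$ has every possible edge, i.e.\ $G = K_n$.

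For part (ii), the symmetric idea applies but with a subtlety: I want the minimizer to be edge-minimal \emph{within} the $2$-connected family. Suppose $G$ minimizes $\Pi_i$ among $2$-connected graphs on $n$ vertices, and suppose $G \neq C_n$. Since $C_n$ is (up to isomorphism) the unique $2$-connected graph on $n$ vertices with the fewest edges, namely $n$, any $G \neq C_n$ must contain at least one edge $e$ whose removal leaves the graph $2$-connected. Then $G - e$ is again in the family, and Lemma~\ref{l1}(ii) gives $\Pi_i(G-e) < \Pi_i(G)$, contradicting minimality; hence $G = C_n$.

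The main obstacle is the deletion step in part (ii): unlike addition, deleting an arbitrary edge of a $2$-connected graph can destroy $2$-connectivity, so I cannot remove just any edge and stay in the admissible family. The key supporting fact I would invoke is that a $2$-connected graph that is not a cycle always possesses an edge whose deletion preserves $2$-connectivity. This follows from the standard ear-decomposition theory of $2$-connected graphs: every such graph is built from a cycle by successively adding ears (paths or edges between existing vertices), and if the graph is not itself a single cycle then at least one ear was added, so an edge on the most recently added ear of length~$1$, or more generally an edge lying on a cycle that is not a bridge of the rest, can be deleted while keeping $2$-connectivity. Establishing this reachability claim is the genuine content; once it is in place, both parts of the lemma follow immediately from the strict monotonicity in Lemma~\ref{l1}.
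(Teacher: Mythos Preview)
Your argument for part~(i) is fine and is exactly what the paper intends: the paper simply states that Lemma~\ref{l1} yields Lemma~\ref{l2}, and edge-addition together with the fact that adding an edge preserves $2$-connectivity does the job.

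For part~(ii), however, there is a genuine gap. The assertion that \emph{every $2$-connected graph which is not a cycle has an edge whose removal keeps it $2$-connected} is false. A concrete counterexample is $K_{2,3}$ (equivalently, the theta graph consisting of three internally disjoint $u$--$v$ paths each of length~$2$): it is $2$-connected, it is not a cycle, yet deleting any edge produces a vertex of degree~$1$, so the resulting graph is not $2$-connected. More generally, the \emph{minimally $2$-connected} graphs (those in which every edge deletion destroys $2$-connectivity) form a well-studied class that contains many graphs besides cycles. Your ear-decomposition sketch breaks down precisely because the last ear need not have length~$1$; if it has length at least~$2$, deleting one of its edges leaves an internal ear-vertex with degree~$1$.

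The clean fix bypasses Lemma~\ref{l1} entirely for part~(ii). Since $G$ is $2$-connected, every vertex has degree at least~$2$, so
\[
\Pi_1(G)=\prod_{v\in V(G)} d(v)^2 \ge 4^{n}=\Pi_1(C_n),
\qquad
\Pi_2(G)=\prod_{v\in V(G)} d(v)^{d(v)} \ge 4^{n}=\Pi_2(C_n),
\]
with equality in either case if and only if $G$ is $2$-regular, i.e.\ $G\cong C_n$. This is presumably the content hidden behind the paper's one-line reference, even though strictly speaking it does not use Lemma~\ref{l1}.
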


\begin{lemma}\label{l7}
Let $C_1, C_2$ be cycles, and $P_s= u_1u_2 \cdots u_s$ be an internal path of $G = C_1P_sC_2$ such that $u_1 \in V(C_1)$ and $u_s \in V(C_2)$. Assume that $u_1v_1, u_1v_2 \in  E(C_1)$ and $u_sw_1, u_sw_2 \in E(C_2)$.  Let $G' = G- \{u_1v_2, u_sw_1, u_sw_2 \} + \{v_2w_2, u_1w_1\}$. Then $\Pi_i(G) > \Pi_i(G')$ with $i=1, 2$.
\end{lemma}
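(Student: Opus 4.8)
The plan is to reduce the whole inequality to the behavior of a single vertex, since the edge-swap defining $G'$ disturbs only a handful of local degrees. First I would record the degrees in $G$ of every vertex appearing in the transformation. Because $u_1$ lies on $C_1$ with cycle-neighbors $v_1,v_2$ and also begins the internal path (so $u_1u_2\in E(G)$), we have $d_G(u_1)=3$; symmetrically $d_G(u_s)=3$ from its cycle-neighbors $w_1,w_2$ together with $u_{s-1}u_s\in E(G)$. Each of $v_1,v_2,w_1,w_2$ is an ordinary cycle vertex, so has degree $2$, and the interior path-vertices are untouched.

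Next I would track how these degrees change. Deleting $u_1v_2$ and adding $u_1w_1$ shows $u_1$ loses one neighbor and gains one, so $d_{G'}(u_1)=3$ is unchanged; the vertex $v_2$ trades $u_1$ for $w_2$ (via $v_2w_2$), $w_1$ trades $u_s$ for $u_1$, and $w_2$ trades $u_s$ for $v_2$, so each of $v_2,w_1,w_2$ keeps degree $2$. The decisive point is $u_s$: it loses both $w_1$ and $w_2$ and gains nothing, so $d_{G'}(u_s)=1$. No edge incident to any further vertex is moved, hence $d_{G'}(v)=d_G(v)$ for all $v\neq u_s$.

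With the bookkeeping settled, the conclusion is immediate from the product formulas $\Pi_1(G)=\prod_v d(v)^2$ and $\Pi_2(G)=\prod_v d(v)^{d(v)}$. Since every multiplicand agrees except the one at $u_s$,
\[
\frac{\Pi_1(G)}{\Pi_1(G')}=\frac{d_G(u_s)^2}{d_{G'}(u_s)^2}=\frac{3^2}{1^2}=9>1,\qquad \frac{\Pi_2(G)}{\Pi_2(G')}=\frac{d_G(u_s)^{\,d_G(u_s)}}{d_{G'}(u_s)^{\,d_{G'}(u_s)}}=\frac{3^3}{1^1}=27>1,
\]
which yields $\Pi_i(G)>\Pi_i(G')$ for $i=1,2$.

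The only genuine care needed—and the step I expect a careful reader to scrutinize—is the structural verification that $G'$ is a legitimate simple connected graph, so that the degree list above is exhaustive. Here I would observe that the internal path forces $u_1\neq u_s$ and keeps the two cycles vertex-disjoint, whence $v_2\in V(C_1)$ and $w_2\in V(C_2)$ are distinct (so $v_2w_2$ is a new edge) and likewise $u_1w_1$ is new; thus no multi-edge arises, and tracing $u_1\!-\!v_1\!-\!\cdots\!-\!v_2\!-\!w_2\!-\!\cdots\!-\!w_1\!-\!u_1$ shows the two cycles merge into a single cycle with the path pendant at $u_1$, so $G'$ stays connected. Beyond this the argument is purely the degree accounting above; notably Propositions~\ref{p1}--\ref{p2} are not required, since exactly one degree changes and it only decreases.
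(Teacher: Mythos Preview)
Your argument is correct and is exactly the paper's approach: the paper's proof simply asserts that $d_{G'}(u_s)=1<3=d_G(u_s)$ while $d_{G'}(v)=d_G(v)$ for every other vertex, and concludes. Your version carries out the same degree bookkeeping with additional care (verifying each of $u_1,v_2,w_1,w_2$ individually and checking that $G'$ is simple and connected), but the underlying idea is identical.
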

\begin{proof}
By the transformation from $G$ to $G'$, we have $d_{G'}(u_s) =1 < d_G(u_s) = 3$. For $v \in V(G)- \{u_s\}$, $d_G(v) = d_{G'}(v)$. Then $\Pi_i(G) > \Pi_i(G')$ with $i=1, 2$, and we complete the proof.
\end{proof}

\begin{lemma} \label{l3}
Let $G_1P_mG_2$ and $G_1G_2P_m$ be graphs   (see Fig 2), in which  $P_m$ is a   path, and  $G_1$, $G_2$ are connected. Then $\Pi_1(G_1P_mG_2) \geq \Pi_1(G_1G_2P_m)$ and  $\Pi_2(G_1P_mG_2) \leq \Pi_2(G_1G_2P_m)$.
\end{lemma}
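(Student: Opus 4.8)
The plan is to track precisely how the two graphs differ at the vertex-degree level and then reduce each of the two index inequalities to a single application of the monotone functions in Propositions \ref{p1} and \ref{p2}. Write the path as $P_m = u_1u_2\cdots u_m$ with $u_1 \in V(G_1)$ and $u_m \in V(G_2)$, and set $a = d_{G_1}(u_1)$ and $b = d_{G_2}(u_m)$. In $G_1P_mG_2$ the path is internal, so $u_1$ has degree $a+1$, $u_m$ has degree $b+1$, the $m-2$ interior vertices $u_2,\dots,u_{m-1}$ each have degree $2$, and every other vertex keeps its degree in $G_1$ or $G_2$. In $G_1G_2P_m$ the two pieces are amalgamated at a single vertex $w$ (the common image of $u_1$ and $u_m$) from which a pendant copy of $P_m$ hangs; here $w$ has degree $a+b+1$, the pendant endpoint has degree $1$, and once more there are exactly $m-2$ vertices of degree $2$, with all remaining degrees unchanged. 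A count of vertices and edges confirms this amalgamation is the rearrangement keeping $n$ (and the number of cut edges) fixed.

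First I would observe that the two graphs have identical degree multisets except that $G_1P_mG_2$ contributes the pair of values $a+1$ and $b+1$ where $G_1G_2P_m$ contributes $a+b+1$ and $1$. Hence, since $1^2 = 1^1 = 1$, both ratios $\Pi_i(G_1P_mG_2)/\Pi_i(G_1G_2P_m)$ collapse to the contribution of just these vertices. For $\Pi_1 = \prod_v d(v)^2$ the claim $\Pi_1(G_1P_mG_2) \ge \Pi_1(G_1G_2P_m)$ is equivalent to $(a+1)(b+1) \ge (a+b+1)$, which I would rewrite as $\frac{a+1}{a+b+1} \ge \frac{1}{b+1}$ and read off from Proposition \ref{p1} with $m=b$: the function $t(x)=\frac{x}{x+b}$ is increasing and $a+1 \ge 1$, so $t(a+1) \ge t(1)$. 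For $\Pi_2 = \prod_v d(v)^{d(v)}$ the claim $\Pi_2(G_1P_mG_2) \le \Pi_2(G_1G_2P_m)$ is equivalent to $(a+1)^{a+1}(b+1)^{b+1} \le (a+b+1)^{a+b+1}$, i.e. $\frac{(a+1)^{a+1}}{(a+b+1)^{a+b+1}} \le \frac{1}{(b+1)^{b+1}}$, which is exactly $l(a+1) \le l(1)$ for $l(x)=\frac{x^x}{(x+b)^{x+b}}$ from Proposition \ref{p2}, again using that $l$ is decreasing and $a+1 \ge 1$.

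The only genuinely delicate point is the bookkeeping of the first paragraph: one must verify that the $m-2$ degree-$2$ vertices and all the untouched vertices of $G_1$ and $G_2$ really do cancel between the two graphs, so that the comparison is driven solely by $\{a+1,b+1\}$ versus $\{a+b+1,1\}$; an error in identifying which vertex gains or loses an edge would propagate everywhere. I expect no analytic difficulty once the substitutions $m=b$ and $x=a+1$ (against $x=1$) into the propositions are in place, since those propositions are tailored to exactly these ratios. Finally I would record that both inequalities are strict unless $a=0$ or $b=0$ (a trivial $G_1$ or $G_2$), which is precisely what makes the statement non-strict ($\ge$ and $\le$), and that the transformation preserves the number of cut edges, matching the intended use of this lemma in the sequel.
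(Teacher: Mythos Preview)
Your proposal is correct and follows essentially the same route as the paper. The paper parametrizes by the degrees $x=d_{G_1P_mG_2}(u)$ and $y=d_{G_1P_mG_2}(v)$ (so $x=a+1$, $y=b+1$ in your notation), observes that the only degree changes are $\{x,y\}\to\{x+y-1,1\}$, and then reduces the two ratios to a single application of Proposition~\ref{p1} and Proposition~\ref{p2} respectively, exactly as you do; your extra paragraph of bookkeeping on the $m-2$ degree-$2$ vertices and the vertex count is just a more explicit justification of the same cancellation the paper takes for granted.
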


 \begin{figure}[htbp]
    \centering
    \includegraphics[width=6in]{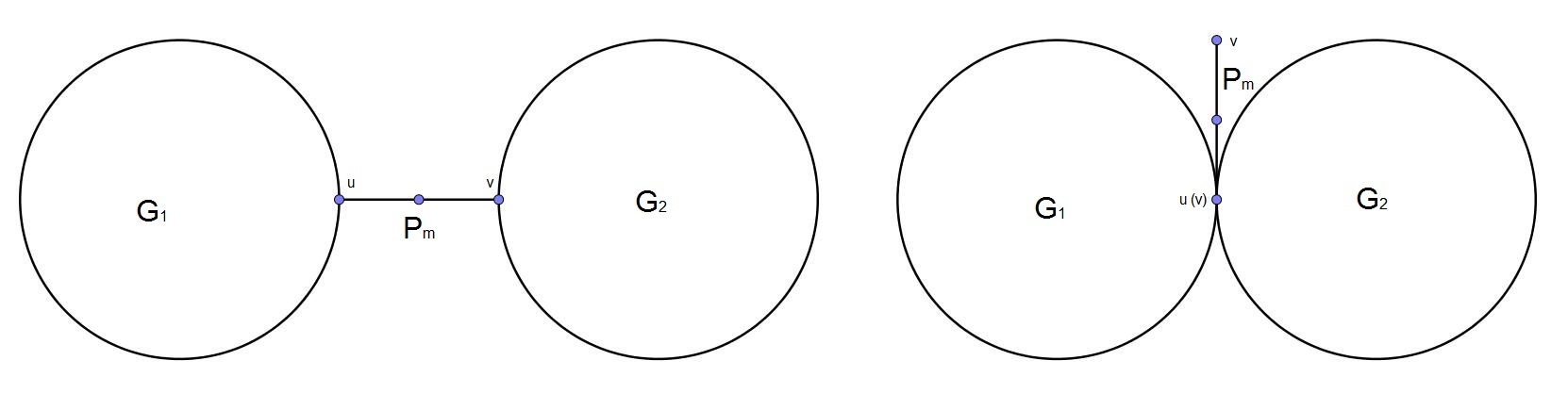}
 \caption{ $G_1P_mG_2$ and $G_1G_2P_m$.}
   \label{fig: te}
\end{figure}
\begin{proof}   Let $d_{G_1P_mG_2}(u)=x$ and $d_{G_1P_mG_2}(v)=y$. Then $d_{G_1G_2P_m}(u)=x+y-1$. By the definitions of multiplicative Zagreb indices, we obtain
\begin{eqnarray}
 \frac{\Pi_1(G_1P_mG_2)}{\Pi_1(G_1G_2P_m)}
= \frac{x^2y^2}{(x+y-1)^2 1^2}
                         = \bigg(\frac{\frac{x}{x+y-1}  }{\frac{1}{1+(y-1)}} \bigg)^{2}. \nonumber
\end{eqnarray}
Since $x\geq 1, y \geq 1$, and by Propostion \ref{p1}, we have $\Pi_1(G_1P_mG_2) \geq \Pi_1(G_1G_2P_m)$.
Note that
\begin{eqnarray}
 \frac{\Pi_2(G_1P_mG_2)}{\Pi_2(G_1G_2P_m)}= \frac{x^{x}y^{y}}{(x+y-1)^{(x+y-1)} 1^{1}}
                         =\frac{\frac{x^{x}}{{(x+y-1)}^{(x+y-1)}}}{\frac{1^{1}}{{(1+y-1)}^{(1+y-1)}}}.\nonumber
\end{eqnarray}
By $x\geq 1$ and  Proposition \ref{p2}, we have  $\frac{\Pi_2(G_1P_mG_2)}{\Pi_2(G_1G_2P_m)} \leq 1$, that is, $\Pi_2(G_1P_mG_2) \leq \Pi_2(G_1G_2P_m)$. Thus, this completes the proof.
\end{proof}

From Lemma \ref{l3}, we have the useful lemma below.
\begin{lemma}\label{l4}
 Let $GT$ be a graph by indentifying a vertex of a tree $T \ncong S_n$ to a vertex $u$ of $G$, and $GS$ be a graph by attaching $|E(T)|$ pendent edges to $u$   (see Fig 3).
 Then $\Pi_1(GT)> \Pi_1(GS)$ and $\Pi_2(GT) < \Pi_2(GS)$.
 \end{lemma}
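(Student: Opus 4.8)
The plan is to transform $GT$ into $GS$ by a finite sequence of moves, each one a direct instance of Lemma \ref{l3}, arranged so that $\Pi_1$ strictly decreases and $\Pi_2$ strictly increases at every step. First I would isolate the single move. Let $u_0$ be the vertex of $T$ identified with $u$, and root $T$ at $u_0$. Since $T \ncong S_n$, the attached tree is not a star centered at $u_0$, so $u_0$ has a neighbour $v$ in $T$ with $d_T(v) = y \geq 2$; denote the neighbours of $v$ other than $u_0$ by $w_1, \ldots, w_{y-1}$. Taking $G_2$ to be the subtree of $T$ rooted at $v$ (which is connected) and $G_1$ to be the remainder of $GT$ containing $u$, these two connected pieces meet only in the edge $uv$, which serves as the (degenerate, $m=2$) internal path $P_m$ of Lemma \ref{l3}. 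Replacing the edges $vw_1, \ldots, vw_{y-1}$ by $uw_1, \ldots, uw_{y-1}$ carries $GT = G_1P_mG_2$ to $G_1G_2P_m$, and the only degrees altered are $d(u)\colon x \mapsto x+y-1$ and $d(v)\colon y \mapsto 1$. Thus Lemma \ref{l3} applies verbatim and yields $\Pi_1(GT) \geq \Pi_1(GT')$ and $\Pi_2(GT) \leq \Pi_2(GT')$ for the resulting graph $GT'$.

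Next I would upgrade these to strict inequalities and show that iteration terminates exactly at $GS$. For strictness, note that $u$ is a vertex of the nontrivial graph $G$ and also carries the tree, so $x = d(u) \geq 2$, while $y \geq 2$ by the choice of $v$; the ratio computations in Lemma \ref{l3} (through Propositions \ref{p1} and \ref{p2}) then reduce to the condition $(x-1)(y-1) > 0$, giving strict inequalities in both indices at this step. For termination, observe that the move deletes the $y-1 \geq 1$ edges $vw_i$, none of which is incident to $u$, and creates the edges $uw_i$, all incident to $u$, leaving every other edge of the attached tree untouched. Hence the number of tree-edges not incident to $u$ strictly decreases with each move; after finitely many moves it is zero, meaning every remaining tree-edge meets $u$, i.e.\ the attached tree is the star centered at $u$, which is precisely $GS$. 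Composing the strict inequalities along the chain $GT = GT_0, GT_1, \ldots, GT_r = GS$ yields $\Pi_1(GT) > \Pi_1(GS)$ and $\Pi_2(GT) < \Pi_2(GS)$.

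The main difficulty is bookkeeping rather than any new inequality. I must check that each move is faithfully an instance of Lemma \ref{l3}: that the subtree $G_2$ is genuinely connected, that the single edge $uv$ legitimately plays the role of $P_m$ with $m=2$ (so the move fits the cut-edge framework of the paper), and — most importantly — that no degree other than those of $u$ and $v$ changes, so that all unchanged factors cancel in the ratios $\Pi_i(GT)/\Pi_i(GT')$ and only the $x,y,(x+y-1),1$ factors of Lemma \ref{l3} survive. I would also record carefully that $T \ncong S_n$ furnishes the first eligible vertex $v$ with $d_T(v)\geq 2$, and that $d(u)\geq 2$ persists throughout (indeed $d(u)$ only grows), since the strict comparison would degenerate at a step with $x=1$.
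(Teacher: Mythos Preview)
Your proof is correct and follows exactly the approach the paper indicates: the paper gives no argument at all for Lemma~\ref{l4}, merely prefacing it with ``From Lemma~\ref{l3}, we have the useful lemma below,'' so your iterative application of Lemma~\ref{l3}---moving the children of a non-leaf neighbour of $u$ up to $u$ until the attached tree becomes a star---is a faithful and rigorous expansion of what the authors leave implicit. Your attention to strictness (requiring $x\ge 2$ and $y\ge 2$ so that $(x-1)(y-1)>0$) and to termination via the decreasing count of tree-edges not incident to $u$ fills in precisely the details the paper omits.
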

\begin{figure}[htbp]
\centering
\includegraphics[width=6in]{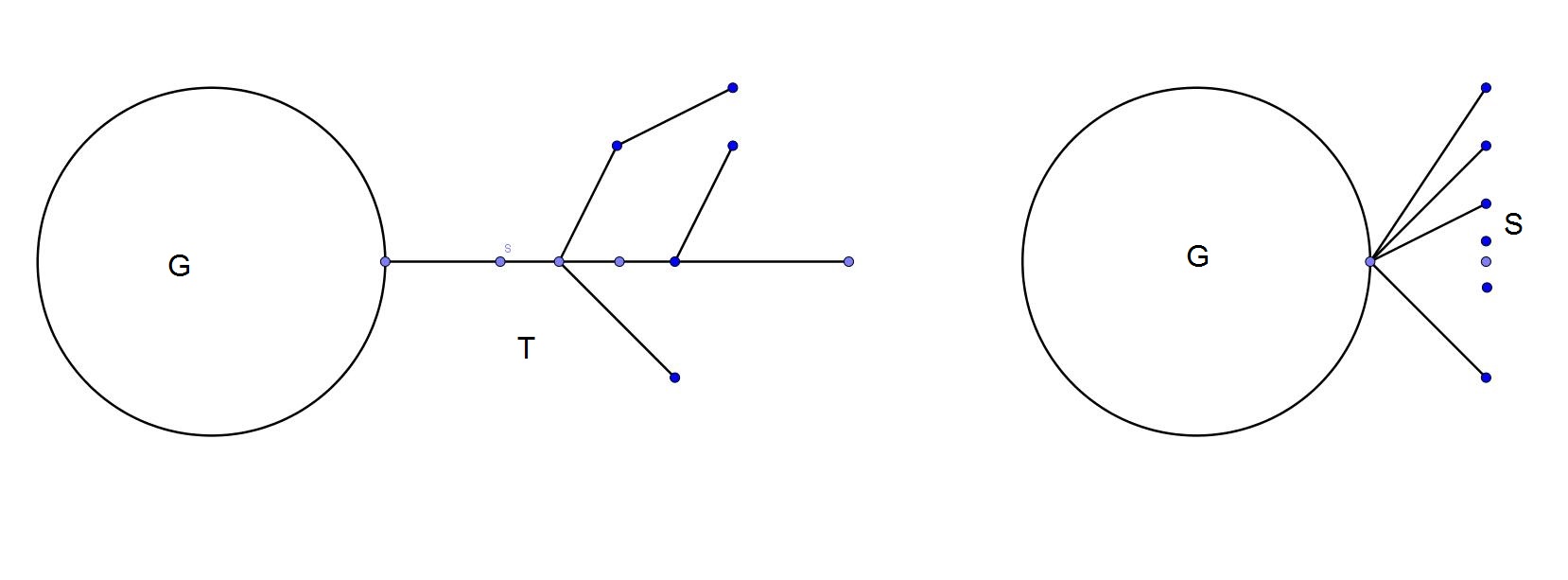}
 \caption{ $GT$ and $GS$.}
\label{fig: te}
\end{figure}


\begin{lemma}\label{l5} 
Let $u$ ($v$, respectively) be a vertex in $G$, and $u_1, u_2, \dots, u_s$ be the endvertices of pendent path $P_1, P_2, \cdots , P_s$ ($v_1, v_2, \dots, v_t$ be the endvertices of  $P_1', P_2', \cdots , P_t'$, respectively). Set
$uu_i' \in E(P_i)$ with $1 \leq i \leq s$, and $vv_j' \in E(P_j')$ with $1 \leq j \leq t$.  Let $G'=G-\left\{uu_i'\right\}+\left\{vu_i'\right\}$ with $1 \leq i \leq s$, $G''=G-\{vv_j'\}+\{uv_j'\}$ with $1 \leq j \leq t$ and $\left|V(G_0)\right|\geq 3$   (see Fig 4). 
 Then either $\Pi_1(G) \geq \Pi_1(G')$ and $\Pi_2(G) \leq \Pi_2(G')$,   or $\Pi_1(G) > \Pi_1(G^{''})$ and $\Pi_2(G) < \Pi_2(G^{''})$.
\end{lemma}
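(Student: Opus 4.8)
The plan is to notice that both transformations $G\to G'$ and $G\to G''$ leave every vertex degree unchanged except at $u$ and $v$, so only the two factors coming from $u$ and $v$ affect $\Pi_1$ or $\Pi_2$. Writing $a=d_G(u)$ and $b=d_G(v)$, and recalling that $s$ (resp.\ $t$) pendent paths sit at $u$ (resp.\ $v$), I would first record the degree changes: $d_{G'}(u)=a-s$, $d_{G'}(v)=b+s$, while $d_{G''}(u)=a+t$, $d_{G''}(v)=b-t$. Each moved endvertex $u_i'$ (resp.\ $v_j'$) merely trades its single attachment from one of $u,v$ to the other, so its degree, and the degrees of all interior path vertices, are preserved; in particular $a+b$ is invariant under both moves. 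The hypothesis $|V(G_0)|\ge 3$ forces $u$ and $v$ each to keep a neighbour inside the connected core, so $a-s=d_{G_0}(u)\ge 1$ and $b-t=d_{G_0}(v)\ge 1$, which keeps every shifted degree positive and lets Propositions \ref{p1} and \ref{p2} apply.

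The structural point is that the two moves shift degree in opposite directions between $u$ and $v$, so the whole statement is governed by the single comparison of $d_G(u)$ with $d_G(v)$, which is exactly what produces the ``either/or''. I would split into two cases and always push the paths onto the heavier endpoint. If $a\ge b$, compare $G$ with $G''$: since $t\ge 1$ we have $a\ge b>b-t$, and
\[
\frac{\Pi_1(G)}{\Pi_1(G'')}=\left(\frac{a}{a+t}\cdot\frac{b}{b-t}\right)^{2}
=\left(\frac{a/(a+t)}{(b-t)/b}\right)^{2}>1,
\qquad
\frac{\Pi_2(G)}{\Pi_2(G'')}=\frac{a^{a}/(a+t)^{a+t}}{(b-t)^{b-t}/b^{b}}<1,
\]
where the first inequality is Proposition \ref{p1} (the function $x/(x+t)$ is increasing, evaluated at $x=a$ versus $x=b-t$) and the second is Proposition \ref{p2} (the function $x^{x}/(x+t)^{x+t}$ is decreasing, again at $a$ versus $b-t$). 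This gives $\Pi_1(G)>\Pi_1(G'')$ and $\Pi_2(G)<\Pi_2(G'')$, the second alternative. If instead $b>a$, the symmetric computation with $s$ in place of $t$ and the inequality $b>a>a-s$ yields $\Pi_1(G)>\Pi_1(G')$ and $\Pi_2(G)<\Pi_2(G')$, which in particular gives the (non-strict) first alternative. Since $a=b$ makes the hypotheses of both cases hold, the inclusive ``either/or'' is always satisfied.

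The step I expect to be the crux is the \emph{coupling}: one must check that the single elementary inequality driving each case ($a>b-t$, respectively $b>a-s$) simultaneously pushes the $\Pi_1$-ratio above $1$ through the monotonicity of $x/(x+m)$ and the $\Pi_2$-ratio below $1$ through the monotonicity of $x^x/(x+m)^{x+m}$, so that the two indices are forced to move in opposite directions on the \emph{same} transformation. Once this is in place, the remainder is pure bookkeeping of degrees together with the observation, supplied by $|V(G_0)|\ge 3$, that $d_{G_0}(u),d_{G_0}(v)\ge 1$ so no degree collapses to zero and the two Propositions remain legitimate.
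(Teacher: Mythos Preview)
Your proof is correct and follows essentially the same approach as the paper: reduce to the two changed degrees, express each ratio $\Pi_i(G)/\Pi_i(G')$ and $\Pi_i(G)/\Pi_i(G'')$ as a quotient of the monotone functions of Propositions~\ref{p1} and~\ref{p2} evaluated at two points, and case-split to decide which transformation wins. The only cosmetic difference is the threshold: the paper splits on $d_G(v)\ge d_G(u)-s$ versus $d_G(v)\le d_G(u)-s-1$ (so that the first case directly yields the non-strict inequalities of the first alternative, with equality possible when $y=x-s$), whereas you split on $d_G(u)\ge d_G(v)$ versus $d_G(v)>d_G(u)$ and then deduce the needed comparisons $a>b-t$ and $b>a-s$; both splits are exhaustive and feed the same two Propositions in the same way.
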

\begin{figure}[htbp]
    \centering
    \includegraphics[width=6in]{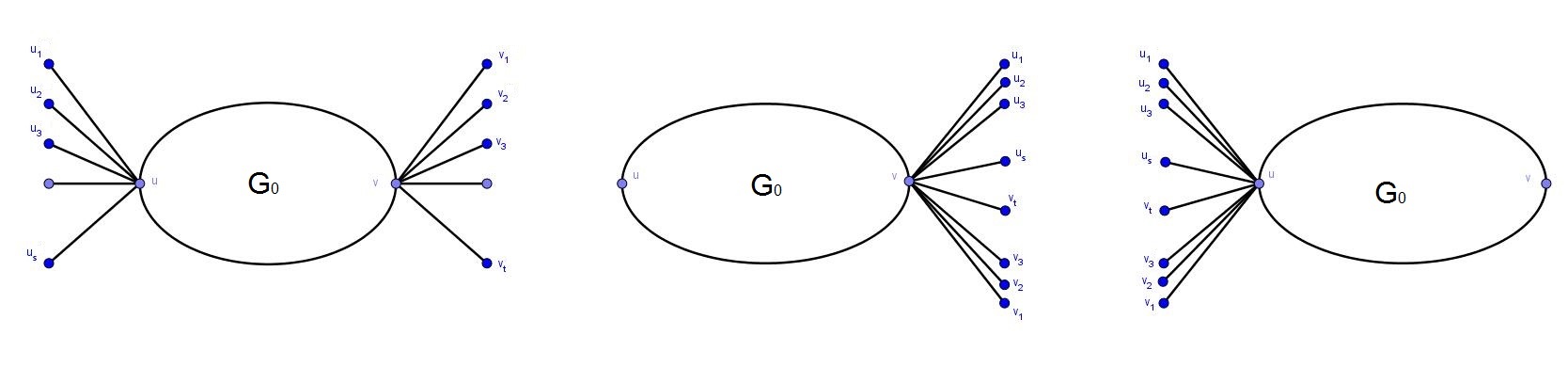}
\caption{ $G$, $G'$ and $G''$.}
   \label{fig: te}
\end{figure}

\begin{proof}Let $d_G(u)=x$, $d_G(v)=y$. By the constructions of $G'$ and $G''$ , we have $d_{G'}(u)=d_G(u)-s=x-s$,  $d_{G'}(v)=d_G(v)+s=y+s$,
$d_{G''}(u)=d_G(u)+t=x+t$ and   $d_{G''}(v)=d_G(v)-t=y-t.$
Combining with the concepts of mutiplicative Zagreb indices, we have 
 \begin{eqnarray}
   \frac{\Pi_1(G)}{\Pi_1(G')}=\frac{x^{2}y^{2}}{(x-s)^{2}(y+s)^{2}}
                                 =\frac{(\frac{y}{y+s})^{2}}{(\frac{x-s}{(x-s)+s})^{2}},\nonumber
 \end{eqnarray}
 \begin{eqnarray}
   \frac{\Pi_2(G)}{\Pi_2(G')}=\frac{x^{x}y^{y}}{(x-s)^{x-s}(y+s)^{y+s}}
                                 =\frac{\frac{y^{y}}{(y+s)^{y+s}}}{\frac{(x-s)^{x-s}}{x^{x}}}
                                 =\frac{\frac{y^{y}}{(y+s)^{y+s}}}{\frac{(x-s)^{x-s}}{{[(x-s)+s]}^{(x-s)+s}}},\nonumber
 \end{eqnarray}
 \begin{eqnarray}
   \frac{\Pi_1(G)}{\Pi_1(G'')}=\frac{x^{2}y^{2}}{(x+t)^{2}(y-t)^{2}}
                                  =\frac{(\frac{x}{x+t})^{2}}{(\frac{y-t}{(y-t)+t})^{2}} \nonumber
 \end{eqnarray}
and
 \begin{eqnarray}
   \frac{\Pi_2(G)}{\Pi_2(G'')}=\frac{x^{x}y^{y}}{(x+t)^{x+t}(y-t)^{y-t}}
                                  =\frac{\frac{x^{x}}{(x+t)^{x+t}}}{\frac{(y-t)^{y-t}}{y^{y}}}
                                  =\frac{\frac{x^{x}}{(x+t)^{x+t}}}{\frac{(y-t)^{y-t}}{[(y-t)+t]^{(y-t)+t}}}.\nonumber
 \end{eqnarray}

If $y\geq x-s$, by Propositions \ref{p1} and \ref{p2}, we can obtain that $\Pi_1(G) \geq \Pi_1(G')$ and $\Pi_2(G) \leq \Pi_2(G')$.
If $y \leq x-s -1$,  then   $x \geq y+s+1 > y-t$.  Propositions \ref{p1} and \ref{p2} yield that  $\Pi_1(G) > \Pi_1(G'')$ and $\Pi_2(G) < \Pi_2(G'')$. Thus, the lemma  is proved.
\end{proof}


\begin{lemma}\label{l6}
Let $P_1=u_1u_2 \cdots u_s$ and $P_2 = v_1v_2 \cdots v_t$ be two pendent path of $G$ with $s, t \geq 2$ and $d(u_s) = d(v_t) =1$   (see Fig 5). Let $G' = G - v_1v_2 + u_sv_2$. Then $\Pi_1(G) < \Pi_1(G')$  and $\Pi_2(G) > \Pi_2(G')$.
\end{lemma}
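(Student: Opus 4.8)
The plan is to follow the template of Lemma~\ref{l3} and Lemma~\ref{l5}: identify exactly which vertex degrees change under the surgery $G' = G - v_1v_2 + u_sv_2$, write $\Pi_i(G)/\Pi_i(G')$ as a product over those vertices, and then invoke Propositions~\ref{p1} and~\ref{p2}. First I would record the degree changes. Grafting the tail $v_2 v_3 \cdots v_t$ onto the pendant end of $P_1$ affects only two vertices: the former pendant vertex $u_s$ gains the neighbour $v_2$, so $d_{G'}(u_s) = 2$ whereas $d_G(u_s) = 1$, and the attachment vertex $v_1$ loses the neighbour $v_2$, so $d_{G'}(v_1) = d_G(v_1) - 1$. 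Writing $a := d_G(v_1)$, the vertex $v_2$ keeps degree $2$ when $t \geq 3$ and degree $1$ when $t = 2$, so in either case $d_{G'}(v_2) = d_G(v_2)$; every remaining vertex (including $u_1$ when $u_1 \ne v_1$, and the interior vertices of both paths) is untouched. Hence the two products telescope to
\[
\frac{\Pi_1(G)}{\Pi_1(G')} = \frac{1^2\, a^2}{2^2 (a-1)^2} = \frac{a^2}{4(a-1)^2}, \qquad \frac{\Pi_2(G)}{\Pi_2(G')} = \frac{1^1\, a^a}{2^2 (a-1)^{a-1}} = \frac{a^a}{4(a-1)^{a-1}}.
\]

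Next I would evaluate each ratio, the decisive structural fact being that $v_1$ is the attachment (branching) vertex carrying the pendent path in Fig.~5, so that $a = d_G(v_1) \geq 3$ and thus $a - 1 \geq 2 > 1$. For the first index I rewrite $\frac{a^2}{4(a-1)^2} = \big(\frac{1/(1+1)}{(a-1)/((a-1)+1)}\big)^2$ and apply Proposition~\ref{p1} with $m = 1$: since $t(x) = x/(x+1)$ is increasing and $a-1 > 1$, we get $\frac{a-1}{a} = t(a-1) > t(1) = \frac12$, so the squared fraction is $< 1$, giving $\Pi_1(G) < \Pi_1(G')$. For the second index I set $l(x) = x^x/(x+1)^{x+1}$ (Proposition~\ref{p2} with $m=1$), so that $\frac{a^a}{4(a-1)^{a-1}} = l(1)/l(a-1)$; since $l$ is decreasing and $a-1 > 1$ we have $l(a-1) < l(1)$, hence the ratio exceeds $1$ and $\Pi_2(G) > \Pi_2(G')$.

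The one point needing care — and the only genuine obstacle — is the strictness. Both ratios equal $1$ exactly when $a = 2$, so the strict inequalities rely on $d_G(v_1) \geq 3$. I would therefore make explicit that, as a branching vertex supporting the pendent path $P_2$ (rather than a degree-$2$ interior vertex that would merely extend $P_1$ into a single longer path), $v_1$ indeed has degree at least $3$; this rules out the degenerate equality case and completes the proof.
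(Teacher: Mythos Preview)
Your proposal is correct and follows essentially the same route as the paper: isolate the two vertices $u_s$ and $v_1$ whose degrees change, form the ratios $\Pi_i(G)/\Pi_i(G')$, and then apply Propositions~\ref{p1} and~\ref{p2} together with the observation $d_G(v_1)\geq 3$ to get the strict inequalities. Your write-up is in fact a bit more careful than the paper's (you explicitly check that $d_{G'}(v_2)=d_G(v_2)$ in both the $t=2$ and $t\geq 3$ cases, and you spell out why $a=2$ is excluded), but the underlying argument is identical.
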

\begin{figure}[htbp]
    \centering
    \includegraphics[width=5in]{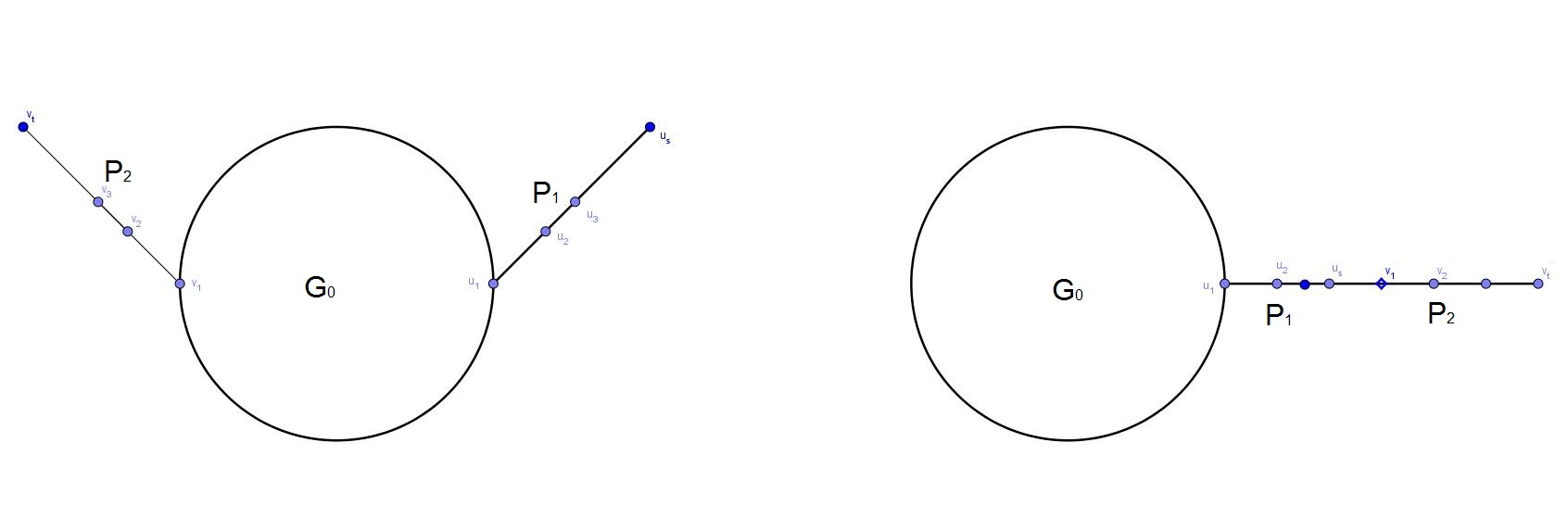}
\caption{ $G$ and $G'$.}
   \label{fig: te}
\end{figure}
\begin{proof} Note that $d(u_1) \geq 3, d(v_1) \geq 3$.
By the definitions of mutiplicative Zagreb indices, we have
\begin{eqnarray}
   \frac{\Pi_1(G)}{\Pi_1(G')}=\frac{d(u_s)^2 d(v_1)^2}{d_{G'}(u_s)^2 d_{G'}(v_1)^2}
                                  =\bigg(\frac{\frac{1}{2}}{\frac{d(v_1)-1}{d(v_1)}}\bigg)^2.\nonumber
 \end{eqnarray}
By Proposition \ref{p1}, we have $\frac{\Pi_1(G)}{\Pi_1(G')} < 1$, that is, $\Pi_1(G) < \Pi_1(G')$. 
 \begin{eqnarray}
   \frac{\Pi_2(G)}{\Pi_2(G')}=\frac{d(u_s)^{d(u_s)} d(v_1)^{d(v_1)}}{d_{G'}(u_s)^{d_{G'}(u_s)} d_{G'}(v_1)^{d_{G'}(v_1)}}
                                  =\bigg(\frac{\frac{~1^1~}{~2^2~}}{\frac{(d(v_1)-1)^{d(v_1)-1}}{d(v_1)^{d(v_1)}}}\bigg)^2. \nonumber
 \end{eqnarray}
By Proposition \ref{p2}, we have $\frac{\Pi_2(G)}{\Pi_2(G')} > 1$, that is, $\Pi_2(G) > \Pi_2(G')$.  

Thus, this completes the proof.
\end{proof}

\section{ Graphs with smallest  multiplicative Zagreb indices  in $\mathbb{G}_{n,k}$}

In this section, we  discuss graphs with the smallest $\Pi_1(G)$ and $\Pi_2(G)$  in $\mathbb{G}_{n,k}$. 
\begin{thom}\label{t3}
Let $G$ be a graph in $\mathbb{G}_{n,k}$ with $1 \leq k \leq n-3$.  Then
$$\Pi_1(G) \geq  4^{n-k-1}(k+2)^2,$$
where the  equality holds if and only $G \cong  C_n^S$, respectively.
\end{thom}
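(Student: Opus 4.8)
The plan is to identify $C_n^S$ as the unique minimizer by first computing its index and then transforming an arbitrary $G\in\mathbb{G}_{n,k}$ into $C_n^S$ through a chain of moves drawn from the lemmas above, each of which preserves the number $k$ of cut edges and does not increase $\Pi_1$. For the value, note that in $C_n^S$ the single vertex bearing all $k$ pendant edges together with its two cycle edges has degree $k+2$, the remaining $n-k-1$ cycle vertices have degree $2$, and the $k$ pendant vertices have degree $1$; hence
$$\Pi_1(C_n^S)=(k+2)^2\cdot 4^{\,n-k-1}\cdot 1^{\,k}=4^{\,n-k-1}(k+2)^2 ,$$
so it suffices to show that no member of $\mathbb{G}_{n,k}$ has a smaller value.

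The reduction proceeds in four stages. First, since a cycle is the $\Pi_i$-minimal $2$-connected graph (Lemma~\ref{l2}(ii)) and edges interior to a block are never cut edges, I replace every non-trivial block of $G$ by a cycle on the same vertex set; by Lemma~\ref{l1}(ii) this strictly lowers $\Pi_1$ unless the block is already a cycle, and $k$ is unchanged. Second, I drive the cyclomatic number down to $1$: when two cycles are joined by an internal path, Lemma~\ref{l7} collapses them into one cycle with a pendant path while keeping $k$ fixed (the internal-path edges merely become pendant-path edges), and when two cycles share a vertex $v$ the companion move --- deleting one edge at $v$ from each cycle and joining the two freed ends --- merges them into a single cycle, dropping $d(v)$ from $4$ to $2$ and leaving $k$ intact. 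Iterating yields a unicyclic graph whose unique cycle has length $\ell$ and which carries hanging trees of total size $k$; counting vertices forces $\ell=n-k$.

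Third, by Lemma~\ref{l4} each hanging tree that is not already a star is replaced by the corresponding bundle of pendant edges at its attachment vertex, strictly decreasing $\Pi_1$ and preserving $k$, so that afterwards every cut edge is a pendant edge issuing from a vertex of $C_{n-k}$. Fourth, I concentrate these pendant edges at one vertex via Lemma~\ref{l5}: for any two cycle vertices $u,v$ carrying $p\ge q\ge 1$ pendant edges, transferring a pendant edge from $v$ to $u$ multiplies the degree product by $(3+p)(1+q)/((2+p)(2+q))$, which is strictly less than $1$ since the numerator minus denominator equals $q-p-1<0$; repeating this strict move leaves all $k$ pendant edges at a single vertex, and the graph is then exactly $C_n^S$.

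The main obstacle is the cyclomatic reduction in the second stage: Lemma~\ref{l7} as stated only treats cycles separated by an internal path, so I must supply and verify the companion transformation for cycles meeting at a vertex, and in every move confirm that the edge bookkeeping keeps the cut-edge total equal to $k$, since an off-by-one here would move the graph out of $\mathbb{G}_{n,k}$. The equality claim then follows because each of the four stages strictly decreases $\Pi_1$ whenever it applies nontrivially --- in particular the concentration step is strict by the factor computed above --- so any $G\not\cong C_n^S$ admits a strictly improving move and cannot be extremal, leaving $C_n^S$ as the unique minimizer.
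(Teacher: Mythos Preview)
Your proof is correct and follows essentially the same route as the paper's: reduce each nontrivial block to a cycle via Lemma~\ref{l2}(ii), collapse to a single cycle via Lemma~\ref{l7}, turn hanging trees into pendant stars via Lemma~\ref{l4}, and concentrate all pendant edges at one vertex (the paper invokes Lemma~\ref{l5}; you compute the degree ratio directly). Your companion move for two cycles meeting at a cut vertex and your strictness check in the concentration step actually fill in details that the paper's terse argument leaves implicit.
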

\begin{proof}
Choose a graph $G \in \mathbb{G}_{n,k}$ such that $\Pi_1(G)$ is as small as possible.
Let $E_c$ be a   cut  edge set of $G$ and $B_1, B_2, \cdots, B_{k+1}$ be the components of $G- E_c$. By Lemma \ref{l2}, we have $B_i$ is a cycle or an isolated vertex. Lemma \ref{l7} implies that $G$ has a unique cycle.  By Lemma \ref{l4}, all cut edges in $G$ are pendent edge. By Lemma \ref{l5}, all pendent edges share a common supporting vertex, that is, $G \cong C_n^S$. Thus, this completes the proof.
\end{proof}

\begin{thom}\label{t4}
Let $G$ be a graph in $\mathbb{G}_{n,k}$ with $1 \leq k \leq n-3$.  Then 
$$\Pi_2(G) \geq 27*4^{n-2},$$ 
where the equality holds if and only $G \cong C_n^P$.
\end{thom}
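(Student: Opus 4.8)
$\textbf{Proof proposal.}$ The plan is to mirror the structure of the proof of Theorem~\ref{t3}, but now tracking $\Pi_2$ instead of $\Pi_1$ and paying attention to the reversed inequalities in the supporting lemmas. I would start by choosing $G \in \mathbb{G}_{n,k}$ so that $\Pi_2(G)$ is as small as possible, let $E_c$ be its cut-edge set, and let $B_1,\dots,B_{k+1}$ be the components of $G-E_c$. By Lemma~\ref{l2}(ii), each nontrivial block of $G$ must be a cycle (since among $2$-connected graphs the cycle minimizes $\Pi_2$), and Lemma~\ref{l7} forces $G$ to contain a unique cycle: two cycles joined by an internal path can be restructured to strictly decrease $\Pi_2$, so a $\Pi_2$-minimizer cannot have two cycles.

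Next I would argue about how the $k$ cut edges should be arranged. Here is where the proof diverges from Theorem~\ref{t3}. In the $\Pi_1$ case, Lemma~\ref{l4} ($\Pi_2(GT)<\Pi_2(GS)$) and Lemma~\ref{l5} pushed toward concentrating pendent edges at a single support vertex (the star configuration $C_n^S$). For $\Pi_2$ the inequalities go the other way: Lemma~\ref{l4} says the tree attachment $GT$ has \emph{smaller} $\Pi_2$ than the star attachment $GS$, so a $\Pi_2$-minimizer wants its cut edges spread out into paths rather than bundled into a star. Likewise Lemma~\ref{l6} ($\Pi_2(G)>\Pi_2(G')$, where $G'$ lengthens one pendent path at the expense of another) shows that merging two pendent paths into one longer path strictly decreases $\Pi_2$; repeatedly applying this collapses all the pendent structure into a single pendent path. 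Combining these, the extremal configuration is a single cycle $C_{n-k}$ with a single pendent path $P_k$ attached at one vertex, which is exactly $G\cong C_n^P$.

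To finish I would verify the stated value $\Pi_2(C_n^P)=27\cdot 4^{n-2}$ by a direct degree count, and confirm uniqueness. In $C_n^P$ the attachment vertex has degree $3$, the remaining $n-k-1$ cycle vertices and the $k-1$ internal path vertices all have degree $2$, and the single terminal pendent vertex has degree $1$. Using $\Pi_2(G)=\prod_{u}d(u)^{d(u)}$, the degree-$3$ vertex contributes $3^3=27$, the degree-$1$ vertex contributes $1^1=1$, and the $n-2$ vertices of degree $2$ contribute $2^2=4$ each, giving $27\cdot 4^{n-2}$ as claimed.

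The main obstacle I anticipate is the arrangement step, specifically justifying that all cut edges coalesce into \emph{one} pendent path attached at a \emph{single} cycle vertex, rather than several shorter paths hanging off different vertices. Lemma~\ref{l6} handles merging two pendent paths that emanate from the graph, but one must be careful that every application stays inside $\mathbb{G}_{n,k}$ (it preserves the number of cut edges and keeps the graph connected with the same cyclomatic number) and that the process terminates at the genuinely unique minimizer. I would also need to rule out the degenerate possibility that attaching the path to a cycle vertex versus elsewhere changes the degree sequence in a way that lowers $\Pi_2$ further; checking that moving the attachment point keeps the degree-$3$ vertex on the cycle (and does not, say, create a higher-degree vertex) secures both the value and the uniqueness of $C_n^P$.
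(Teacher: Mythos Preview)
Your proposal is correct and follows essentially the same route as the paper: choose a $\Pi_2$-minimizer, use Lemma~\ref{l2}(ii) to force every nontrivial block to be a cycle, use Lemma~\ref{l7} to reduce to a single cycle, and use Lemma~\ref{l6} to merge all pendent paths into one, arriving at $C_n^P$. The paper's proof is terser and does not invoke Lemma~\ref{l4} at all (your appeal to it is motivational rather than logically necessary, since Lemma~\ref{l6} alone already collapses any branching tree structure into a single pendent path); your added verification of the value $27\cdot 4^{n-2}$ and the discussion of why the transformations stay inside $\mathbb{G}_{n,k}$ are helpful elaborations the paper omits.
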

\begin{proof}
Let $G \in \mathbb{G}_{n,k}$ be a graph  such that $\Pi_2(G)$ is minimal.
Let $E_c$ be a  cut   edge set of $G$ and $B_1, B_2, \cdots, B_{k+1}$ be the components of $G- E_c$. By Lemma \ref{l2}, we have $B_i$ is a cycle  or an isolated vertex. Lemma \ref{l7} implies that $G$ has a unique cycle. By Lemma \ref{l6}, there is only one pendent path in $G$.
Thus $G \cong C_n^P$, and we prove this theorem.
\end{proof}

\section{ Graphs with   largest multiplicative Zagreb indices  in $\mathbb{G}_{n,k}$ }

We proceed to consider graphs with the largest $\Pi_1(G)$ and $\Pi_2(G)$  in $\mathbb{G}_{n,k}$ in this section. 
\begin{thom}\label{t1}
Let $G$ be a graph in $\mathbb{G}_{n,k}$ with $1 \leq k \leq n-3$.  Then 
$$\Pi_1(G) \leq 4^{k-1}(n-k)^2(n-k-1)^{2(n-k-1)},$$
where the equality holds if and only $G \cong K_n^P$.
\end{thom}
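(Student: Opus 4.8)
The plan is to pick $G\in\mathbb{G}_{n,k}$ with $\Pi_1(G)$ maximal and to push it, through moves that keep $n$ and the cut-edge count $k$ fixed while never decreasing $\Pi_1$, onto $K_n^P$. As a first sanity check I would pin down the target value: in $K_n^P$ one vertex of $K_{n-k}$ meets the attached path and has degree $n-k$, the remaining $n-k-1$ clique vertices have degree $n-k-1$, the path contributes $k-1$ vertices of degree $2$ and one pendent vertex, so $\Pi_1(K_n^P)=\big((n-k)(n-k-1)^{\,n-k-1}2^{\,k-1}\big)^2=4^{k-1}(n-k)^2(n-k-1)^{2(n-k-1)}$, matching the claimed bound. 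It then remains to prove maximality and uniqueness. Writing $E_c$ for the cut-edge set, the components $B_1,\dots,B_{k+1}$ of $G-E_c$ are $2$-connected graphs or isolated vertices. Inserting an edge inside one component changes no bridge, hence keeps $k$ fixed while strictly raising $\Pi_1$ by Lemma \ref{l1}; so by Lemma \ref{l2}(i) every nontrivial $B_i$ is already complete.

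Next I would show $G$ has a single nontrivial block. If two complete blocks shared a cut vertex, I could add all missing edges between them to merge them into one larger clique: this creates and destroys no bridge, so $k$ is unchanged, and Lemma \ref{l1} strictly increases $\Pi_1$, contradicting maximality. Hence no two blocks share a cut vertex, and all blocks are joined only by internal paths of bridges.

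The main obstacle is to rule out two complete blocks $K_a,K_b$ (with $a,b\ge 3$) joined by an internal path of bridges, since now the blocks cannot be fused without destroying a bridge and lowering $k$. The plan is a combined move: choosing $K_b$ to be an endblock, fuse the $a+b$ vertices of $K_a$ and $K_b$ into a clique $K_{a+b-1}$ together with one leftover pendent vertex, which restores the single cut edge lost in the fusion; all external cut edges previously attached to $K_a$ stay attached to the new clique. This preserves $n$ and $k$. Although collapsing the bridge slightly lowers the local degree product, the move simultaneously inserts $\binom{a+b-1}{2}-\binom{a}{2}-\binom{b}{2}$ new clique edges, each raising $\Pi_1$ by Lemma \ref{l1}, and the gain from these dominates, so $\Pi_1$ strictly increases -- a contradiction. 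I expect the delicate point to be checking that this net change is an increase uniformly in $a,b\ge 3$ and in the location of the pendent path; I would isolate it as a one-line computational sublemma, ultimately resting on the superadditivity $(p+q)^{p+q}>p^{p}q^{q}$ for $p,q>0$, which is exactly what makes merging clique mass beneficial for $\Pi_1$.

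Once $G$ has a unique nontrivial block, all $k$ cut edges hang off it as a pendent forest. By Lemma \ref{l5} I may assume these pendent paths all issue from a single vertex of the block, and by Lemma \ref{l6} any two pendent paths merge into one while strictly increasing $\Pi_1$; iterating collapses the pendent forest to a single pendent path (this is precisely why the path $K_n^P$, rather than the star $K_n^S$, is extremal for $\Pi_1$). A final vertex count then forces the block to be $K_{n-k}$ and the path to be $P_k$, so $G\cong K_n^P$, with equality in the bound holding exactly there. This establishes the theorem.
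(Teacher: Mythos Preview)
Your overall plan matches the paper's: maximize over $\mathbb{G}_{n,k}$, force every nontrivial $B_i$ to be complete, forbid two cliques sharing a cut vertex, merge distinct cliques by a block--shifting move, and finally reduce the pendent part to a single path. Two points deserve correction.

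First, your ``combined move'' is the paper's Claim~\ref{c8}, but your description of it is off. In the actual move one takes the endblock $K_b$, strips its cut vertex $v$ of its $b-1$ neighbours inside $K_b$, and attaches those $b-1$ vertices to $K_a$ to form $K_{a+b-1}$; the vertex $v$ then becomes the new pendent end of what was the internal path. No cut edge is ever ``lost in the fusion'' and nothing is ``restored'': the bridges along the internal path are untouched. Your accounting via $\binom{a+b-1}{2}-\binom{a}{2}-\binom{b}{2}$ new edges and Lemma~\ref{l1} is also not a proof, because the move simultaneously deletes the $b-1$ edges at $v$; one must compare the full degree products. The paper does this by writing the ratio as $\big(\frac{n_1 n_2 (n_1-1)^{n_1-1}(n_2-1)^{n_2-1}}{(n_1+n_2-1)(n_1+n_2-2)^{n_1+n_2-2}}\big)^2$ and showing it is below $1$ via a derivative argument. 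Your proposed superadditivity $(p+q)^{p+q}>p^pq^q$ handles only the $(n_i-1)$ factors; the extra $n_1 n_2/(n_1+n_2-1)$ must still be absorbed, so the sublemma is not ``one line'' as stated, though it can be made to work with the sharper bound $(p+q)^{p+q}\ge \binom{p+q}{p}p^pq^q$.

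Second, your invocation of Lemma~\ref{l5} is in the wrong direction. That lemma says that consolidating pendent paths at a single vertex \emph{weakly decreases} $\Pi_1$ (and increases $\Pi_2$); it is used in the paper only in the minimization Theorem~\ref{t3} and the $\Pi_2$-maximization Theorem~\ref{t2}. You cannot use it here to assume all pendent paths issue from one clique vertex. Fortunately you do not need it: Lemma~\ref{l6} applies to any two pendent paths $P_1,P_2$ with $d(u_1),d(v_1)\ge 3$, regardless of whether $u_1=v_1$ or whether they sit on the clique or at an interior branching point of the pendent forest, and each application strictly increases $\Pi_1$. Iterating Lemma~\ref{l6} alone collapses the pendent forest to a single path; simply delete the appeal to Lemma~\ref{l5}. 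The paper organises this same reduction as the case analysis in Claim~\ref{c5}.
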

\begin{proof}
Denote by a graph $G \in \mathbb{G}_{n,k}$  such that $\Pi_1(G)$ is maximal.
Set $E_c$ to be a   cut edge set of $G$ and $B_1, B_2, \cdots, B_{k+1}$ the components of $G- E_c$. By Lemma \ref{l2}, we have $B_i$ is a clique of size at least 3 or an isolated vertex.  Next we start with the follwing claims.

\begin{claim}\label{c4}
Every two cliques of size at least 3 do not share a common vertex.
\end{claim}
\noindent{\small  \em Proof of Claim \ref{c4}.}
We proceed it by a contradiction. Assume there are at least two blocks $B_1, B_2$ shared a common vertex $v_0$ in $G$ such that $|B_1|, |B_2| \geq 3$. Choose $v_1 \in V(B_1)$,  $v_2 \in V(B_2)$ and $v_1, v_2 \neq v_0$. Let $G' = G+ v_1v_2$. By Lemma \ref{l1}, $\Pi_2(G') > \Pi_2(G)$, that is a contradiction to the assumption of $G$.  The claim is proved.

We introduce a graph transformation that is used in the rest of our proof.

\begin{claim}\label{c8} 
Let $K_{n_1}$ and $K_{n_2}$ be two farthest endblocks of $K_{n_1}G_0K_{n_2}$ such that $v_{11} \in V(K_{n1}) \cap V(G_0)$ and $v_{l1} \in V(K_{n2}) \cap V(G_0)$   (see Fig 6). If $d(v_{11}) = n_{1} \geq 3$ and $d(v_{l1}) = n_2 \geq 3$, then $\Pi_1(K_{n_1}G_0K_{n_2}) < \Pi_1(K_{n_1+n_2-1}G_0).$
\end{claim}
\begin{figure}[htbp]
    \centering
    \includegraphics[width=7in]{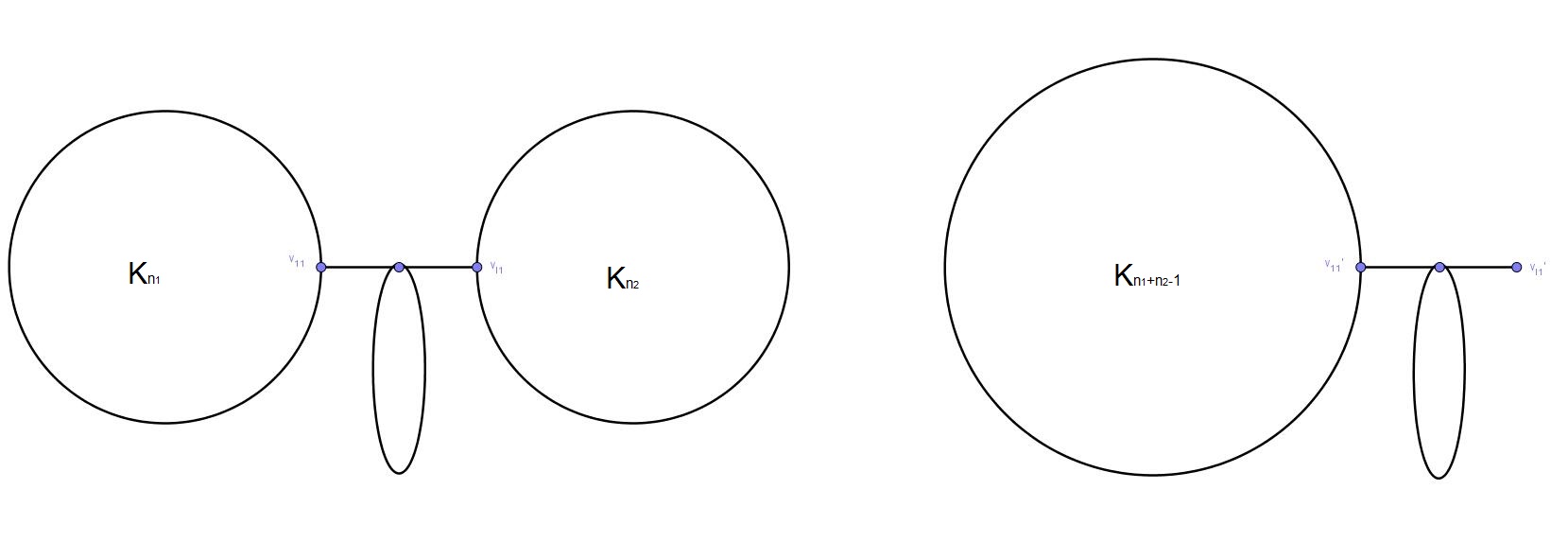}
\caption{ $G$ and $G'$.}
   \label{fig: te}
\end{figure}

\noindent{\small  \em Proof of Claim \ref{c8}.}
Let $V(K_{n_1}) = \{ v_{11}, v_{12}, \cdots, v_{1n_1}\}$ and  $V(K_{n_2}) = \{ v_{l1}, v_{l2}, \cdots, v_{ln_2}\}$.  Denote  by $G = K_{n_1}G_0 K_{n_2}$ and $G' = G- \{v_{l1}v_{li}, i \geq 2\} + \{v_{li}v_{1j}, i \geq 2, j \geq 1\}=K_{n_1+n_2 -1}G_0$.
By the concepts of multiplicative Zagreb indices, we have 
 \begin{eqnarray}
\frac{\Pi_1(G)}{\Pi_1(G')} &=& \bigg(\frac{d(v_{11})d(v_{12})d(v_{13}) \cdots d(v_{1n_1}) d(v_{l1})d(v_{l2})d(v_{l3}) \cdots d(v_{ln_2})}{d'(v_{11})d'(v_{12})d'(v_{13}) \cdots d'(v_{1n_1}) d'(v_{l1})d'(v_{l2})d'(v_{l3}) \cdots d'(v_{ln_2})}\bigg)^2 \nonumber \\
&=& \bigg(\frac{n_1n_2(n_1-1)^{n_1-1}(n_2-1)^{n_2-1}}{(n_1+n_2-1) (n_1+n_2-2)^{n_1+n_2-2}} \bigg)^2\nonumber\\
 &\leq& 
\bigg(\frac{n_1n_2(n_1-1)^{n_1-1}(n_2-1)^{n_2-1}}{ (n_1+n_2-2)^{n_1+n_2-1}}\bigg)^2.\nonumber
 \end{eqnarray}
Let $f(x) = \frac{xn_2(x-1)^{x-1} (n_2-1)^{n_2-1}}{(x+n_2-2)^{x+n_2-1}}$. Then we take a derivative of $ln({f(x)})$ as $\frac{1}{x} + ln(x-1) +1 - ln(x+n_2-2) - \frac{x+n_2-1}{x+n_2-2} < \frac{1}{x} + ln(x-1) - ln(x+n_2-2) \leq \frac{1}{x} + ln(x-1)- ln(x+1)$, by $n_2 \geq 3$.

Set $g(x) = \frac{1}{x} + ln(x-1)- ln(x+1)$. Note that $g'(x) = \frac{x^2+1}{x^2(x^2-1)} > 0$ and $lim_{x \rightarrow  \infty} g(x) =  lim_{x \rightarrow  \infty} ln(\frac{(x-1)e^{\frac{1}{x}}}{x+1} ) = 0$, by L' Hospital's Rule. Thus, $g(x) < 0$.
So, $f(x)$ is a decreasing function  and $$\frac{\Pi_1(G_1)}{\Pi_1(G_2)} \leq \frac{3n_2 (3-1)^{3-1} (n_2 -1)^{n_2-1}}{(3+n_2-2)^{3+n_2-1}} = \frac{12*n_2*(n_2-1)^{n_2-1}}{(n_2+1)^2 (n_2+1) (n_2+1)^{n_2-1}}.$$ 
Since $12 \leq (n_2+1)^2$ and  $n_2 < n_2+1$, then $\frac{\Pi_1(G_1)}{\Pi_1(G_2)} < 1.$ 
This completes the proof of Claim \ref{c8}

\begin{claim}\label{c5}
There exsits exactly one path.
\end{claim}
\noindent{\small  \em Proof of Claim \ref{c5}.}
We prove it by  contradictions. Assume  that there are at least two paths $P_1=u_1u_2 \cdots u_s, P_2 = v_1v_2 \cdots v_l$ with $d(u_1), d(v_1) \geq 3$. We consider three cases that $P_i$  is either a pendent path or an internal path with $i = 1, 2$.

\noindent{\bf \small Case 1.}
$d(u_s) = d(v_l) = 1.$

\noindent{\small  \em Proof of Case 1.}
By Lemma \ref{l6},   there is another graph $G' \in \mathbb{G}_n^k$ such that $\Pi_1(G) < \Pi_1(G')$, which is a contradiction to the choice of $G$.

\noindent{\bf \small Case 2.}
$d(u_s) =1,  d(v_l) \geq 3.$

\noindent{\small  \em Proof of Case 2.} Let $G'' = G- \{v_1v_2, u_1u_2\} + \{v_1u_2, v_2u_s\}$. Note that  $$\frac{\Pi_1(G)}{\Pi_1(G'')} = \frac{d(u_1)^2 d(u_s)^2}{ d_{G''}(u_1)^2 d_{G''}(u_s)^2} = \bigg(\frac{ \frac{~1~}{2}}{\frac{d(u_1) -1}{ d(u_1)}}\bigg)^2.$$
Since $d(u_1) \geq 3$, by Proposition \ref{p1}, we have $\Pi_1(G) < \Pi_1(G'')$, that is a contradiction to the choice of $G$.

\noindent{\bf \small Case 3.}
$d(u_s) \geq 3,  d(v_l) \geq 3.$

\noindent{\small  \em Proof of Case 3.} 
By Case 2, there does not exist any pendent paths in $G$. Then every cut edge is  in an internal path. By choosing two farthest endblocks and Claim 2, there is another graph $G'''$ such that $\Pi_1(G''') > \Pi_1(G)$, which contradicted that $\Pi_1(G)$ is maximal. This completes the proof of Case 3.


 

Therefore,  $G$ contains a unique clique of size at least $3$ and the unique path is a pendent path.
Thus $G \cong K_n^P$, and this completes the proof.
\end{proof}

\begin{thom}\label{t2}
Let $G$ be a graph in $\mathbb{G}_{n,k}$  with $1 \leq k \leq n-3$.  Then 
$$\Pi_2(G) \leq (n-1)^{n-1} (n-k-1)^{(n-k-1)^2},$$
where the equality holds if and only $G \cong K_n^S$.
\end{thom}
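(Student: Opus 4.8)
The plan is to follow the proof of Theorem~\ref{t1} almost verbatim, but to push every transformation in the direction that \emph{increases} $\Pi_2$ instead of $\Pi_1$. I would start by choosing $G\in\mathbb{G}_{n,k}$ with $\Pi_2(G)$ maximal, letting $E_c$ be its cut-edge set and $B_1,\dots,B_{k+1}$ the components of $G-E_c$. By Lemma~\ref{l2}(i) each nontrivial $B_i$ is a complete graph, and exactly as in Claim~\ref{c4} — inserting a chord between two cliques that meet at a vertex strictly raises $\Pi_2$ by Lemma~\ref{l1} while leaving $E_c$ untouched — no two cliques of size at least $3$ share a vertex.

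Next I would prove the $\Pi_2$-analogue of Claim~\ref{c8}: if $K_{n_1}$ and $K_{n_2}$ are the two farthest endblock cliques, joined through $G_0$ at $v_{11}$ and $v_{l1}$, then collapsing them into one clique $K_{n_1+n_2-1}$ strictly increases $\Pi_2$. The operation only adds and deletes edges inside the former blocks, so the new graph stays in $\mathbb{G}_{n,k}$; it raises each absorbed vertex from degree $n_i-1$ to $n_1+n_2-2$ and drops $v_{l1}$ to degree $1$. Writing $\Pi_2(G)/\Pi_2(G')$ out, the claim reduces to
\[
 n_1^{\,n_1}(n_1-1)^{(n_1-1)^2}\,n_2^{\,n_2}(n_2-1)^{(n_2-1)^2}
 < (n_1+n_2-1)^{\,n_1+n_2-1}(n_1+n_2-2)^{(n_1+n_2-2)^2}.
\]
Using $(n_1+n_2-2)^2=(n_1-1)^2+(n_2-1)^2+2(n_1-1)(n_2-1)$ together with $n_i-1<n_1+n_2-2$, this collapses to $n_1^{\,n_1}n_2^{\,n_2}<(n_1+n_2-1)^{\,n_1+n_2-1}(n_1+n_2-2)^{\,2(n_1-1)(n_2-1)}$, which is immediate from $n_i\le n_1+n_2-1$ and the fact that $n_1,n_2\ge 3$ makes the factor $(n_1+n_2-2)^{2(n_1-1)(n_2-1)}$ dwarf the linear term $n_1+n_2-1$. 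Hence maximality forces $G$ to contain exactly one clique $K_{n-k}$.

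With a single clique, the $k$ cut edges form a forest rooted on its vertices. By Lemma~\ref{l4} I replace every attached tree that is not a star by the corresponding bunch of pendent edges at its root (strictly increasing $\Pi_2$), and by Lemma~\ref{l5}, applicable since $G_0=K_{n-k}$ has $n-k\ge 3$ vertices, I slide all pendent edges onto a single clique vertex without decreasing $\Pi_2$. Lemma~\ref{l6} shows the opposite move — fusing two pendent paths into a longer one — would \emph{lower} $\Pi_2$, so the optimum keeps the cut edges as $k$ separate pendent edges at one vertex, i.e.\ $G\cong K_n^S$. Counting degrees ($n-1$ at the dominating vertex, $n-k-1$ at the other $n-k-1$ clique vertices, and $1$ at each of the $k$ leaves) gives $\Pi_2(K_n^S)=(n-1)^{n-1}(n-k-1)^{(n-k-1)^2}$, matching the stated bound.

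For the equality statement I note that Lemmas~\ref{l1},~\ref{l4},~\ref{l6} and the merging claim are all strict, and $K_n^S$ is the only graph in $\mathbb{G}_{n,k}$ on which none of them applies; the single weak step, Lemma~\ref{l5}, only selects which clique vertex carries the leaves and does not change the isomorphism type. Thus $K_n^S$ is the unique maximizer. The one genuinely new ingredient — and hence the main obstacle — is the merging inequality of the second paragraph; the remainder is a sign-flipped reuse of the apparatus already built for Theorem~\ref{t1}.
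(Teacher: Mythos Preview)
Your approach works, but it is more laborious than the paper's own argument, which does \emph{not} mirror Theorem~\ref{t1}. After noting via Lemma~\ref{l2} that each nontrivial $B_i$ is complete, the paper invokes Lemma~\ref{l3} (whose $\Pi_2$-inequality points in the increasing direction) to conclude that in a $\Pi_2$-maximal graph any two blocks joined by an internal path must already share a vertex; then a single chord argument (your Claim~\ref{c4}, their Claim~\ref{c1}) forces exactly one clique, and Lemmas~\ref{l4} and~\ref{l5} finish as you describe. Thus the paper never needs a $\Pi_2$-analogue of Claim~\ref{c8}: your merging inequality is replaced wholesale by one application of Lemma~\ref{l3}. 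Your route buys a tight structural parallel with the $\Pi_1$ proof at the price of the explicit computation, and it also inherits from Claim~\ref{c8} the tacit hypothesis $d(v_{l1})=n_2$ (so that $v_{l1}$ drops to degree~$1$), which can fail when the endblock's cut vertex carries additional cut edges; to close that gap you would need either a slightly more general inequality or a preliminary sweep with Lemmas~\ref{l4}/\ref{l5}, whereas the paper's Lemma~\ref{l3} route sidesteps the issue entirely.
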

\begin{proof}
Pick a graph $G \in \mathbb{G}_{n,k}$ such that $\Pi_2(G)$ is as large as possible.
Denote by $E_c$   a   cut edge set of $G$ and $B_1, B_2, \cdots, B_{k+1}$ be the components of $G- E_c$. By Lemma \ref{l2}, we have $B_i$ is a clique of size at least 3 or an isolated vertex. By Lemma \ref{l3}, if  two blocks  are connected by a path, then they share  a common vertex.  

\begin{claim}\label{c1}
There is a uniqe block $B$ such that $|B| \geq 3$.
\end{claim}
\noindent{\small  \em Proof of Claim \ref{c1}.}
We proceed it by a contradiction. Assume that there are at least two blocks $B_1, B_2$ shared a common vertex $v_0$ in $G$ such that $|B_1|, |B_2| \geq 3$. Choose $v_1 \in V(B_1)$ and $v_2 \in V(B_2)$ and $v_1, v_2 \neq v_0$. Let $G' = G+ v_1v_2$. By Lemma \ref{l1}, $\Pi_2(G') > \Pi_2(G)$ and this claim is proved.

 By Lemmas \ref{l4} and \ref{l5}, we have $G \cong K_n^S$, and this completes the proof.
\end{proof}

\vskip4mm\noindent{\bf Acknowledgements}


 The work is partially supported by the National Natural Science Foundation of China under Grants 11371162, 11571134, 
 and the Self-determined Research Funds of CCNU from the colleges basic research and operation of MOE.

\end{document}